\numberwithin{equation}{section}
\theoremstyle{plain}
\newtheorem{thm}{Theorem}[section]
\newtheorem{prop}[thm]{Proposition}
\newtheorem{cor}[thm]{Corollary}
\newtheorem{lem}[thm]{Lemma}
\newtheorem{rem}[thm]{Remark}
\def\R{\mathcal R}
\def\G{\mathbb{G}}
\def\S{\mathfrak{S}}
\def\x{|x|}
\begin{document}

\title[Fractional Hardy type inequalities on homogeneous Lie groups]
{Fractional Hardy type inequalities on homogeneous Lie groups in the case $Q<sp$}

\author[A. Kassymov]{Aidyn Kassymov}
\address{
  Aidyn Kassymov:
  \endgraf
  \endgraf
  Institute of Mathematics and Mathematical Modeling
  \endgraf
  28 Shevchenko str.
  \endgraf
  050010 Almaty
  \endgraf
  Kazakhstan
  \endgraf
  and 
    \endgraf
  Department of Mathematics: Analysis, Logic and Discrete Mathematics
  \endgraf
  Ghent University, Belgium
  \endgraf
	{\it E-mail address} {\rm kassymov@math.kz}}

  \author[M. Ruzhansky]{Michael Ruzhansky}
\address{
	Michael Ruzhansky:
	 \endgraf
  Department of Mathematics: Analysis, Logic and Discrete Mathematics
  \endgraf
  Ghent University, Belgium
  \endgraf
  and
  \endgraf
  School of Mathematical Sciences
    \endgraf
    Queen Mary University of London
  \endgraf
  United Kingdom
	\endgraf
  {\it E-mail address} {\rm michael.ruzhansky@ugent.be}}

\author[D. Suragan]{Durvudkhan Suragan}
\address{
	Durvudkhan Suragan:
	\endgraf
	Department of Mathematics
	\endgraf
	School of Science and Technology, Nazarbayev University
    \endgraf
	53 Kabanbay Batyr Ave, Nur-Sultan 010000
	\endgraf
	Kazakhstan
	\endgraf
	{\it E-mail address} {\rm durvudkhan.suragan@nu.edu.kz}}

\thanks{
The first and second authors were supported in parts by the FWO Odysseus 1 grant
no. G.0H94.18N: Analysis and Partial Differential Equations, by the Methusalem programme of
the Ghent University Special Research Fund (BOF) (grant no. 01M01021) and by EPSRC grant EP/R003025/2. The first and third
authors also were supported by Ministry of Science and Higher Education grant AP23484106.
}

     \keywords{Integral Hardy inequality, Fractional Hardy inequality, Fractional Hardy-Sobolev inequality, Logarithmic Hardy-Sobolev type inequality, Nash type inequality, homogeneous Lie group.}
 \subjclass{22E30, 43A80.}

     \begin{abstract}
In this paper, we obtain a fractional Hardy inequality in the case $Q<sp$ on homogeneous Lie groups, and as an application we show the corresponding uncertainty principle. Also, we show a fractional Hardy-Sobolev type inequality on homogeneous Lie groups. In addition, we prove fractional logarithmic Hardy-Sobolev and fractional Nash type inequalities on homogeneous Lie groups. We note that the case $Q>sp$ was extensively studied in the literature, while here we are dealing with the complementary range $Q<sp$.
     \end{abstract}
     \maketitle
     \tableofcontents
\section{Introduction}
In the famous work \cite[Theorem B]{Har20}, G. H. Hardy showed the following discrete inequality: if $p>1$ and $\sum\limits_{n=1}^{\infty}a_{n}^{p}$ convergent, then
\begin{equation*}
    \sum_{n=1}^{\infty}\left(\frac{A_{n}}{n}\right)^{p},
\end{equation*}
is convergent, where $A_{n}=a_{1}+\ldots+a_{n}$. The integral form of aforementioned inequality is
\begin{equation}
\int_{a}^{\infty}\frac{1}{x^{p}}\left(\int_{a}^{x}f(t)dt\right)^{p}dx\leq\left(\frac{p}{p-1}\right)^{p}\int_{a}^{\infty}f^{p}(x)dx,
\end{equation}
where $f\geq0$, $p>1$, and $a>0$. At this  moment, there are many works related to  Hardy's integral inequality, see e.g. \cite{Dav99,EE04, GKPW04, KP03, KPS17} and \cite{OK90}. We  note that the multi-dimensional version of the integral Hardy inequality was proved in  \cite{DHK} for the first time.

 In \cite{RV}, the (direct) integral Hardy inequality on metric measure spaces (on homogeneous Lie groups in \cite{RY18b}) was established with applications to homogeneous Lie groups, hyperbolic spaces, Cartan-Hadamard manifolds with negative curvature and on general Lie groups with Riemannian distance for  $1<p\leq q<\infty$. 
 Let us recall this (direct) integral Hardy inequality:
 \begin{thm}[\cite{RY18b}]\label{THM:Hardy1}
Let $1<p\le q <\infty$. Let $\mathbb G $ be a homogeneous group of homogeneous dimension $Q$. 
Let $g,h$ be measurable functions  positive a.e in $\mathbb G$  such that $g\in L^1(\mathbb G\backslash \{0\})$ and $h^{1-p'}\in L^1_{loc}(\mathbb \G)$. Denote
\begin{align}
G(x):= { \int_{\mathbb{G}\backslash{B(0,|x|)}} g(y) dy}  \nonumber
\end{align} 
and 
\begin{align} 
H(x):= \int_{B(0,|x| )}h^{1-p'}(y)dy,\nonumber 
\end{align}
where $\frac{1}{p}+\frac{1}{p'}=1.$
Then the inequality
\begin{equation}\label{EQ:Hardy1}
\bigg(\int_\mathbb G\bigg(\int_{B(0,\vert x \vert)}\vert f(y) \vert dy\bigg)^q g(x)dx\bigg)^\frac{1}{q}\le C_{H}\bigg\{\int_{\mathbb G} {\vert f(x) \vert}^p h(x)dx\bigg\}^{\frac1p}
\end{equation}
hold for all measurable functions $f:\G\to{\mathbb C}$ if and only if  the following condition holds:

\begin{equation}
 D_{1} :=\sup\limits_{x\not=0} \bigg\{G^\frac{1}{q}(x) H^\frac{1}{p'}(x)\bigg\}<\infty.    
\end{equation}
Moreover, the constant $C_{H}$ for which \eqref{EQ:Hardy1} holds and the quantity $D_{1}$ are related by 
\begin{equation}\label{EQ:constants}
D_{1} \leq C_{H}\leq D_1(p')^{\frac{1}{p'}} p^\frac{1}{q}.
\end{equation}   
\end{thm}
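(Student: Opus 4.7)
The plan is to prove the biconditional by treating the two directions separately: necessity $D_1 \le C_H$ follows from a standard test-function argument, while sufficiency together with the quantitative bound $C_H \le D_1 (p')^{1/p'} p^{1/q}$ is obtained by reducing to the classical one-dimensional weighted Hardy inequality of Muckenhoupt--Bradley via polar decomposition on $\mathbb G$.

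For necessity, I would substitute the test function $f_R(y):=h(y)^{1-p'}\chi_{B(0,R)}(y)$ into \eqref{EQ:Hardy1}. Using the identity $(1-p')p+1=1-p'$ one computes $\int_\mathbb G |f_R|^p h\,dy = H(R)$, while for $|x|\ge R$ the inner integral collects all of $B(0,R)$ and equals $H(R)$. Restricting the outer integral to $\mathbb G\setminus B(0,R)$ and using that $G$ is radial with value $G(R)$ on $\{|x|=R\}$, the left-hand side of \eqref{EQ:Hardy1} is bounded below by $H(R)\,G(R)^{1/q}$. Dividing by $H(R)^{1/p}$ and taking the supremum over $R>0$ yields $D_1\le C_H$.

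For sufficiency, the key observation is that $F(x):=\int_{B(0,|x|)}|f(y)|\,dy$ depends on $x$ only through $|x|$, so the outer integral is already essentially one-dimensional. Using the Folland--Stein polar decomposition $\int_\mathbb G \Phi(x)\,dx=\int_0^\infty\!\int_{\wp}\Phi(r\omega)\,r^{Q-1}\,d\sigma(\omega)\,dr$ on the unit quasi-sphere $\wp$, the left-hand side becomes $\bigl(\int_0^\infty F(r)^q\,\tilde g(r)\,dr\bigr)^{1/q}$ with $\tilde g(r):=r^{Q-1}\int_\wp g(r\omega)\,d\sigma(\omega)$, and $F(r)=\int_0^r\phi(\rho)\,d\rho$ for $\phi(\rho):=\rho^{Q-1}\int_\wp|f(\rho\omega)|\,d\sigma(\omega)$. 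A Hölder step in the angular variable with exponents $(p,p')$ then gives $\phi(\rho)^p\, v(\rho)\le \rho^{Q-1}\int_\wp|f|^p h\,d\sigma$ for the explicit weight $v(\rho):=\rho^{(Q-1)(1-p)}\bigl(\int_\wp h^{1-p'}\,d\sigma\bigr)^{1-p}$. Using $(1-p)(1-p')=1$ one verifies $v^{1-p'}(\rho)=\rho^{Q-1}\int_\wp h^{1-p'}\,d\sigma$, whence $\int_0^R v^{1-p'}\,d\rho = H(R)$ and $\int_R^\infty\tilde g\,dr = G(R)$. The 1D Muckenhoupt functional for the weight pair $(\tilde g,v)$ therefore equals $D_1$ exactly, and the sharp Muckenhoupt--Bradley weighted Hardy inequality on $(0,\infty)$ (with $1<p\le q<\infty$) yields the estimate with constant $D_1(p')^{1/p'}p^{1/q}$.

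The main obstacle is the bookkeeping in this reduction. One must pick the auxiliary one-dimensional weight $v$ so that (i) Hölder in $\omega$ absorbs the non-radial behaviour of $f$ into $\|f\|_{L^p(h)}$ without loss, and (ii) the resulting 1D Muckenhoupt functional coincides with $D_1$ on the nose rather than only up to a constant, so that the sharp factor $(p')^{1/p'}p^{1/q}$ is preserved. The alignment above, rooted in the identity $(1-p)(1-p')=1$ and in the radial nature of $H$ and $G$, makes this work and reduces the proof to the classical one-dimensional Hardy inequality.
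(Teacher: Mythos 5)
Your proof is correct; note first that this paper does not prove Theorem \ref{THM:Hardy1} at all --- it is quoted from \cite{RY18b} (see \cite{RV} for the metric measure space version) --- so the comparison is with the proof in those sources. Your necessity step is the same test-function computation used there: the identities $(1-p')p+1=1-p'$ and $(1-p)(1-p')=1$ are right, $H(R)$ is finite (quasi-balls are relatively compact and $h^{1-p'}\in L^1_{loc}$) and positive ($h>0$ a.e.), so dividing by $H(R)^{1/p}$ is legitimate. Where you genuinely diverge is in the sufficiency direction: \cite{RV,RY18b} do not reduce to a one-dimensional theorem but rerun Muckenhoupt's scheme directly on the group, with an auxiliary function built from $H$; that is unavoidable on a general metric measure space. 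On a homogeneous group the polar decomposition \eqref{EQ:polar} is exact with radial density $r^{Q-1}$, the function $F(x)=\int_{B(0,|x|)}|f(y)|\,dy$ is genuinely radial, and your angular H\"older step with the weight $v(\rho)=\rho^{(Q-1)(1-p)}\bigl(\int_{\mathfrak{S}}h^{1-p'}(\rho\omega)\,d\sigma\bigr)^{1-p}$ does make the one-dimensional Muckenhoupt functional equal $D_1$ on the nose, so your reduction is legitimate and arguably more economical. Two minor caveats: you must quote the one-dimensional Bradley-type theorem in the version whose upper constant is precisely $p^{1/q}(p')^{1/p'}$ (this form is standard, see \cite{OK90,KP03,DHK}, but it is not the \emph{sharp} constant, so the word ``sharp'' should be dropped); and at radii where $\int_{\mathfrak{S}}h^{1-p'}(\rho\omega)\,d\sigma=\infty$ the H\"older bound is vacuous, though there $v(\rho)=0$ and the pointwise inequality $\phi(\rho)^{p}v(\rho)\le\rho^{Q-1}\int_{\mathfrak{S}}|f|^{p}h\,d\sigma$ holds trivially. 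Neither point affects the validity of your argument.
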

As an example, in \cite{RY18b}, the authors showed if $g(x)=|x|^{\beta}$, $h(x)=|x|^{\alpha}$ and also,
$$\beta+Q<0,\,\,\alpha<Q(p-1),\,\,\,\text{and}\,\,\,q(\alpha+Q)-p(\beta+Q)=pqQ,$$ then \eqref{EQ:Hardy1} holds, and the constant $C_{H}$ satisfies
\begin{equation*}
    \frac{|\mathfrak{S}|^{\frac{1}{q}+\frac{1}{p'}}}{|\beta+Q|^{\frac{1}{q}}(\alpha(1-p')+Q)^{\frac{1}{p'}}}\leq C_{H}\leq (p')^{\frac{1}{p'}}p^{\frac{1}{q}}\frac{|\mathfrak{S}|^{\frac{1}{q}+\frac{1}{p'}}}{|\beta+Q|^{\frac{1}{q}}(\alpha(1-p')+Q)^{\frac{1}{p'}}},
\end{equation*}
where $|\mathfrak{S}|$ is the area of the unit sphere in $\G$ with respect to the quasi-norm $|\cdot|$. From the last formula we see that $D_{1}= \frac{|\mathfrak{S}|^{\frac{1}{q}+\frac{1}{p'}}}{|\beta+Q|^{\frac{1}{q}}(\alpha(1-p')+Q)^{\frac{1}{p'}}}.$
 In the Abelian case $\G=\mathbb{R}^{N}$, $Q=N$ and$|\cdot|=|\cdot|_{E}$ where $|\cdot|_{E}$ is the standard Euclidean distance, from \eqref{EQ:Hardy1}, we get the multi-dimensional integral Hardy inequality on Euclidean space, that is, Theorem \ref{THM:Hardy1} generalise results from \cite{DHK}. Also, we need to note that in \cite{RV21}, the authors obtained a direct integral inequality in the case $p>q$.
  In \cite{KRSin} and \cite{KRS1in}, the authors demonstrated the reverse Hardy inequality  in the cases $q<0<p<1$ and $-\infty< q\leq p<0$, respectively. 
  
   One of the well-known related inequalities of G. H. Hardy is
\begin{equation}\label{dirhar}
\left\|\frac{f}{|x|}\right\|_{L^{p}(\mathbb{R}^{N})}\leq \frac{p}{N-p}\|\nabla f\|_{L^{p}(\mathbb{R}^{N})},\,\,\,\,1<p<N,
\end{equation}
where $f\in C^{\infty}_{0}(\mathbb{R}^{N})$, $\nabla$ is the Euclidean gradient and constant $\frac{p}{N-p}$ is sharp. The integer order Hardy inequality was intensively studied, as example on the Euclidean space in \cite{Mazya85} and for manyparticles \cite{HHAT}, on the Heisenberg group in \cite{GL},  on stratified groups, for the vector fields and on homogeneous groups in \cite{RS18}.
In \cite{RS17}, the authors showed the Hardy inequality with radial derivative on homogeneous groups in the following form:
\begin{thm}[\cite{RS17}]\label{dirHardy}
Let $\G$ be a homogeneous group of homogeneous dimension $Q$. Let $|\cdot|$
be a homogeneous quasi-norm on $\G$. Let $1 < p < Q$.
Let $f\in C^{\infty}_{0}(\G\setminus\{0\})$ be a complex-valued function. Then
\begin{equation}\label{dirharhom}
\left\|\frac{f}{|x|}\right\|_{L^{p}(\G)}\leq \frac{p}{Q-p}\|\R f\|_{L^{p}(\G)},\,\,\,\,\, 1 < p < Q,
\end{equation}
where $\R=\frac{d}{d|x|}$ is the radial derivative. The constant $\frac{p}{Q-p}$ is sharp.
\end{thm}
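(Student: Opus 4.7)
The plan is to reduce the inequality to a one dimensional weighted Hardy inequality along rays from the origin, using the polar decomposition of Haar measure available on any homogeneous Lie group. Concretely, every $f\in C^\infty_0(\G\setminus\{0\})$ can be written in polar coordinates $x=ry$ with $r=|x|>0$ and $y\in \S$, and one has
\begin{equation*}
\int_\G F(x)\,dx=\int_0^\infty\!\!\int_{\S} F(ry)\,r^{Q-1}\,d\sigma(y)\,dr,
\end{equation*}
together with $\R f(ry)=\partial_r f(ry)$. Substituting this into both sides of \eqref{dirharhom} shows that it suffices to prove, for every $g(r):=f(ry)$ with $y\in\S$ fixed, the one-dimensional inequality
\begin{equation*}
\int_0^\infty |g(r)|^{p}\,r^{Q-p-1}\,dr\;\leq\;\Bigl(\tfrac{p}{Q-p}\Bigr)^{p}\int_0^\infty |g'(r)|^{p}\,r^{Q-1}\,dr,
\end{equation*}
and then to integrate over $\S$.

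The one dimensional estimate I would obtain by the classical integration-by-parts trick. Since $Q>p$, write $r^{Q-p-1}=\tfrac{1}{Q-p}\tfrac{d}{dr}(r^{Q-p})$ and integrate by parts; the boundary terms vanish because $g$ has compact support in $(0,\infty)$. Using $\tfrac{d}{dr}|g|^{p}=p|g|^{p-2}\,\mathrm{Re}(\overline{g}\,g')$ together with $|\mathrm{Re}(\overline{g}\,g')|\leq |g||g'|$ (which handles the complex-valued case) yields
\begin{equation*}
\int_0^\infty |g|^{p} r^{Q-p-1}\,dr \;\leq\; \frac{p}{Q-p}\int_0^\infty |g|^{p-1}|g'|\,r^{Q-p}\,dr .
\end{equation*}
Splitting the integrand as $\bigl(|g|^{p-1}r^{(Q-p-1)/p'}\bigr)\cdot\bigl(|g'|\,r^{(Q-1)/p}\bigr)$ and applying Hölder with exponents $p',p$ (the exponents of $r$ sum correctly to $Q-p$, as a quick check $\tfrac{(Q-p-1)(p-1)}{p}+\tfrac{Q-1}{p}=Q-p$ confirms) gives
\begin{equation*}
\int_0^\infty |g|^{p} r^{Q-p-1}\,dr \;\leq\;\frac{p}{Q-p}\Bigl(\int_0^\infty|g|^{p}r^{Q-p-1}dr\Bigr)^{1/p'}\Bigl(\int_0^\infty|g'|^{p}r^{Q-1}dr\Bigr)^{1/p}.
\end{equation*}
Dividing by the first factor on the right and raising to the $p$-th power produces the desired one dimensional inequality with the constant $(p/(Q-p))^{p}$; integrating over $\S$ against $d\sigma(y)$ then delivers \eqref{dirharhom}.

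For the sharpness claim, I would argue by testing with a family approximating the formal extremizer $f(x)=|x|^{-(Q-p)/p}$, which saturates the inequality by scaling but does not belong to $C^\infty_0(\G\setminus\{0\})$. Introducing smooth radial cut offs $\chi_\varepsilon(r)$ that equal $1$ on $[\varepsilon,1/\varepsilon]$ and vanish near $0$ and $\infty$, and setting $f_\varepsilon(x)=\chi_\varepsilon(|x|)|x|^{-(Q-p)/p}$, one computes both sides in polar coordinates and checks that the ratio of the two sides tends to $p/(Q-p)$ as $\varepsilon\to 0$.

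The only genuinely delicate point will be justifying the use of Hölder and the integration by parts when $g$ vanishes but $g'$ does not (and handling the sign issue for complex $g$); both are standard but need the observation that on the support of $f$ we stay away from the singularity at $0$ (thanks to $f\in C^\infty_0(\G\setminus\{0\})$), so the computations are on compactly supported smooth functions on $(0,\infty)$ and no integrability issues arise.
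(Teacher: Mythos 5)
The paper does not actually prove Theorem \ref{dirHardy}; it is quoted from \cite{RS17}, where the argument is essentially the one you give: the polar decomposition \eqref{EQ:polar} reduces the claim to a one-dimensional weighted Hardy inequality on each ray $r\mapsto f(ry)$, proved by integration by parts and H\"older, with sharpness tested on truncations of $|x|^{-(Q-p)/p}$. Your computation is correct (the exponent bookkeeping in the H\"older split checks out, and compact support in $\G\setminus\{0\}$ kills the boundary terms and all integrability worries), so the only point worth adding is that for a merely continuous quasi-norm the functions $f_\varepsilon(x)=\chi_\varepsilon(|x|)|x|^{-(Q-p)/p}$ need not be smooth, so the sharpness step formally requires an extra smoothing/density argument to place the test functions in $C^{\infty}_{0}(\G\setminus\{0\})$.
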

In the Abelian case $(\mathbb{R}^{N},+)$ with $Q=N$ and $|\cdot|=|\cdot|_{E},$ where $|\cdot|_{E}$ is the standard Euclidean distance, from \eqref{dirharhom} we have
\begin{equation}\label{dirharr}
\left\|\frac{f}{|x|_{E}}\right\|_{L^{p}(\mathbb{R}^{N})}\leq \frac{p}{N-p}\left\|\frac{x}{|x|_{E}}\cdot \nabla f\right\|_{L^{p}(\mathbb{R}^{N})},\,\,\,\,\, 1 < p < N,
\end{equation}
where $\nabla$ is the standard Euclidean gradient. By using the Cauchy-Schwarz inequality from the inequality \eqref{dirharr}, we get \eqref{dirhar}.
   
   The fractional order Hardy inequality has the following form:
   \begin{equation}
       \int_{\mathbb{R}^{N}}\frac{|u(x)|^{p}}{|x|_{E}^{sp}}dx\leq C\int_{\mathbb{R}^{N}}\int_{\mathbb{R}^{N}}\frac{|u(x)-u(y)|^{p}}{|x-y|_{E}^{N+sp}}dxdy,
   \end{equation}
   where $s\in(0,1)$ and $p>1$ such that $N>sp$. The last inequality was studied in  \cite{D04} and \cite{FS08}. On stratified groups, the fractional Hardy type inequality was studied by Ciatti, Cowling and Ricci in \cite{CCR15}, on the Heisenberg group, firstly Roncal and Thangavelu proved Hardy inequality with fractional sub-Laplacian in \cite{RT15} in the case $p=2$, $s\in (0,1)$ and in \cite{AAM17}, Adimurthi and Mallick generalised it to the case $Q>sp$. Also, in \cite{KS19}, the authors generalised results of Adimurthi and  Mallick to homogeneous Lie groups settings, namely, the authors showed fractional Hardy inequality on homogeneous Lie groups in the case $Q>sp$. We especially note that in \cite{HKP97},  the authors showed the fractional Hardy inequality in the Euclidean setting, and our technique is based on that paper. In this paper, we prove the fractional Hardy inequality in the case $Q<sp$. As an application, we show the corresponding uncertainty principle on homogeneous Lie groups.  
   
   The integer order Hardy–Sobolev inequality was proved in \cite{GY00}:

\begin{equation}\label{intHSinin}
    \left(\int_{\mathbb{R}^{N}}\frac{|u(x)|^{p^{*}_{\beta}}}{|x|_{E}^{\beta}}dx\right)^{\frac{p}{p^{*}_{\beta}}}\leq C\int_{\mathbb{R}^{N}}|\nabla u(x)|^{p}dx,
\end{equation}
 where $p^{*}_{\beta}=\frac{(N-\beta)p}{N-p}$, $1<p<N$ and $\beta\in[0,p]$. In particular, the inequality \eqref{intHSinin} gives the Sobolev
inequality if $\beta = 0,$ and the $L^{p}$-Hardy inequality if $\beta = p$, i.e., \eqref{intHSinin} includes both (classical)
Hardy and Sobolev inequalities.  The analogue of \eqref{intHSinin} on graded Lie groups  was obtained in \cite{RY18b}. The fractional version of the
Hardy–Sobolev inequality was also established in \cite{Y15}:

\begin{equation}\label{intHSinfr}
    \left(\int_{\mathbb{R}^{N}}\frac{|u(x)|^{p^{*}_{s,\beta}}}{|x|_{E}^{\beta}}dx\right)^{\frac{p}{p^{*}_{s,\beta}}}\leq C\int_{\mathbb{R}^{N}}\int_{\mathbb{R}^{N}}\frac{|u(x)-u(y)|^{p}}{|x-y|_{E}^{N+sp}}dxdy,
\end{equation}
where $p^{*}_{s,\beta}= \frac{p(N -\beta)}{N - sp}, 0 \leq \beta \leq sp < N$, and $s \in (0, 1)$. From last facts, we have $1<p\leq p^{*}_{s,\beta}\leq p_{s,0}$, where $p_{s,0}=\frac{Qp}{Q-sp} $ is the fractional Sobolev exponent. Similarly, the inequality \eqref{intHSinfr}
becomes the fractional Sobolev inequality and fractional Hardy inequality, if $\beta = 0$ and   if $\beta = sp$, respectively. 
We refer to \cite{AL, BFL08, DF12, FS10} for this topic. In Euclidean settings, for $1\leq sp<N$ and $p_{s,0}=\frac{Ns}{N-sp}$ (automatically $p<p_{s,0}$) we have the fractional Sobolev inequality and for  $N=sp$ and $N<sp$ we have the  fractional  Trudinger and Morrey inequalities, respectively.   In this paper, we show the weighted fractional Hardy-Sobolev type inequality on homogeneous Lie groups. It means, we generalise the fractional Hardy inequality by the exponent parameter $1<p\leq q<\infty$ (in the particular case $Q<sp$), but we can not obtain the classical fractional Sobolev inequality. Hence, we call it the fractional Hardy-Sobolev type inequality.   

On homogeneous Lie groups, firstly the fractional logarithmic Sobolev and Hardy-Sobolev type inequalities were proved in \cite{KRS19} and \cite{KS20}, respectively. Also, another generalisation of the logarithmic Hardy-Sobolev type inequalities was proved in \cite{CKR21}. In this paper, we will establish the fractional version of the logarithmic Sobolev and Nash inequalities for the extend range of $sp>-Q$.
 \section{Main results}
 In  this  section,  we  briefly  recall  definitions and  main  properties  of  the homogeneous groups.
The comprehensive analysis on such groups has been initiated in the works of Folland and Stein \cite{FS}, but in our short exposition below we follow a more recent presentation in the open access books \cite{FR,RS18}. 

A Lie group $\mathbb{G}$ (identified with $\mathbb{R}^{N}$) 
is called a {\em homogeneous (Lie) group} when it is equipped 
with the dilation mapping
$$D_{\lambda}(x):=(\lambda^{\nu_{1}}x_{1},\ldots,\lambda^{\nu_{N}}x_{N}),\; \nu_{1},\ldots, \nu_{n}>0,\; D_{\lambda}:\mathbb{R}^{N}\rightarrow\mathbb{R}^{N},$$
which is an automorphism of the group $\mathbb{G}$ for each $\lambda>0.$
Often, one denotes $D_{\lambda}(x)$ simply by $\lambda x$.
The homogeneous dimension of a homogeneous group $\mathbb{G}$ is denoted by
\begin{equation*}
Q:=\nu_{1}+\ldots+\nu_{N}.
\end{equation*}
A homogeneous group is necessarily nilpotent and unimodular, and the 
Haar measure on $\mathbb{G}$ coincides with the Lebesgue measure (see e.g. \cite[Proposition 1.6.6]{FR}); we will denote it by $dx$. If $|S|$ denotes the volume of a measurable set $S\subset \mathbb{G}$, then
\begin{equation}\label{scal}
|D_{\lambda}(S)|=\lambda^{Q}|S| \quad {\rm and}\quad \int_{\mathbb{G}}f(\lambda x)
dx=\lambda^{-Q}\int_{\mathbb{G}}f(x)dx.
\end{equation}
A homogeneous quasi-norm on $\G$ is any continuous non-negative function
\begin{equation}
\mathbb{G}\ni x\mapsto |x|\in[0,\infty),
\end{equation}
satisfying:
\begin{itemize}
\item[a)] $|x|=|x^{-1}|$ for all $x\in\mathbb{G}$,
\item[b)] $|\lambda x|=\lambda|x|$ for all $x\in \mathbb{G}$ and $\lambda>0$,
\item[c)] $|x|=0$ if and only if $x=0$.
\end{itemize}
The quasi-ball centred at $x \in \mathbb{G}$ with radius $R > 0$ can be defined by
\begin{equation*}
B(x,R) := \{y \in\mathbb {G} : |x^{-1} y|< R\}.
\end{equation*}

\begin{prop}[\cite{FR}, Proposition 3.1.38 and \cite{RS18}, Proposition 1.2.4] \label{prop_quasi_norm1}
If $|\cdot|$ is a homogeneous quasi-norm on  $\mathbb{G}$, there exists $C_{|\cdot|}>0$ such that for every $x,y\in \mathbb{G}$, we have
\begin{equation}\label{tri}
|x y|\leq C_{|\cdot|}(|x| + |y|).
\end{equation}
\end{prop}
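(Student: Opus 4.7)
The plan is to exploit compactness of the closed unit quasi-ball together with the homogeneity of both $|\cdot|$ and the dilations $D_\lambda$. The essential point is that $(x,y)\mapsto |xy|$ is a continuous function on $\G\times\G$, and after rescaling we can restrict attention to a compact set on which this function must attain a finite maximum.

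First I would verify that the closed quasi-ball $\overline{B}(0,1)=\{x\in\G:|x|\le 1\}$ is compact in the Euclidean topology of $\mathbb{R}^N$. A convenient way is to compare $|\cdot|$ with the explicit reference homogeneous quasi-norm $N_0(x):=\bigl(\sum_{i=1}^{N}|x_i|^{2/\nu_i}\bigr)^{1/2}$: both functions are continuous, positive on $\mathbb{R}^N\setminus\{0\}$ and satisfy $|D_\lambda x|=\lambda|x|$ and $N_0(D_\lambda x)=\lambda N_0(x)$. Hence their restrictions to the $N_0$-unit sphere (which is Euclidean-compact, being a bounded subset of a box) are comparable, giving constants $c,C>0$ with $c\,N_0(x)\le|x|\le C\,N_0(x)$ for all $x\in\G$. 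In particular $\overline{B}(0,1)\subset\{N_0\le 1/c\}$, which is Euclidean-bounded, and $\overline{B}(0,1)$ is closed by continuity of $|\cdot|$, so it is compact.

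Next, since the group multiplication on the Lie group $\G$ is smooth, the function $\Phi(x,y):=|xy|$ is continuous on $\G\times\G$, and thus $M:=\max\{\Phi(x,y):|x|\le 1,\;|y|\le 1\}$ is finite. To pass to arbitrary $x,y\in\G$ I would apply homogeneity: if both are nonzero, set $\lambda:=\max(|x|,|y|)>0$ and $x':=D_{1/\lambda}(x)$, $y':=D_{1/\lambda}(y)$, so that $|x'|,|y'|\le 1$. Since $D_{1/\lambda}$ is a group automorphism, $x'y'=D_{1/\lambda}(xy)$, and therefore
\[
|xy|=\lambda\,|x'y'|\le \lambda M=M\max(|x|,|y|)\le M\bigl(|x|+|y|\bigr).
\]
The cases $x=0$ or $y=0$ are immediate because $|xy|$ then reduces to $|y|$ or $|x|$. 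Taking $C_{|\cdot|}:=\max(M,1)$ finishes the argument.

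The only mildly subtle step is the compactness claim, which requires that continuity and homogeneity of $|\cdot|$ jointly force Euclidean boundedness of its sublevel sets; this is the reason for introducing the auxiliary quasi-norm $N_0$. Everything else is a clean compactness-plus-automorphism argument using that $D_\lambda$ respects both the group law and the quasi-norm.
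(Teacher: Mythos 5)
Your argument is correct and is essentially the standard proof of this proposition: the paper itself only cites the result from \cite{FR} (Proposition 3.1.38) and \cite{RS18} (Proposition 1.2.4), and the proofs given there rest on exactly the same two ingredients you use, namely Euclidean compactness of the closed unit quasi-ball (obtained by comparing $|\cdot|$ with a reference homogeneous quasi-norm) and the fact that the dilations are group automorphisms, which lets one rescale to that compact set. No gaps; your handling of the degenerate cases $x=0$ or $y=0$ and the choice $C_{|\cdot|}=\max(M,1)$ are fine.
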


The following polarisation formula on homogeneous Lie groups will be used in our proofs:
there is a (unique)
positive Borel measure $\sigma$ on the
unit quasi-sphere
$
\S:=\{x\in \mathbb{G}:\,|x|=1\},
$
so that for every $f\in L^{1}(\mathbb{G})$ we have
\begin{equation}\label{EQ:polar}
\int_{\mathbb{G}}f(x)dx=\int_{0}^{\infty}
\int_{\S}f(ry)r^{Q-1}d\sigma dr.
\end{equation}
 
\subsection{Fractional Hardy inequality}
As a main result of this section, we show the fractional  Hardy inequality on homogeneous Lie groups.
\begin{thm}\label{integralhar1}
Let $\mathbb{G}$ be a homogeneous Lie group of homogeneous dimension $Q$, and let $|\cdot|$ be a quasi-norm on $\G$. Let $p>1$ be  such that $s>-\frac{Q}{p}$. Assume  that $a(x,y)$ is a non-negative measurable function in both variables on $\G\times\G$, and denote $A(x)=\left(\frac{1}{|B(0,|x|)|}\int_{B(0,|x|)}a^{1-p'}(x,y)dy\right)^{1-p}$.   Suppose that $D_{1}(p')^{\frac{1}{p'}}p^{\frac{1}{p}}<1$ where
\begin{equation}\label{D1}
D_{1}=\sup_{x\neq 0}\left[\left(\int_{\G\setminus B(0,|x|)}\frac{A(y)}{|B(0,|y|)|^{p}|y|^{sp}}dy\right)^{\frac{1}{p}}\left(\int_{B(0,|x|)}\left(\frac{A(y)}{|y|^{sp}}\right)^{1-p'}dy\right)^{\frac{1}{p'}}\right]<\infty.
\end{equation}
Then we have
\begin{equation}
    \left(\int_{\G}\frac{A(x)|u(x)|^{p}}{|x|^{sp}}dx\right)^{\frac{1}{p}}\leq \frac{\left((2C_{|\cdot|})^{-Q-sp}\frac{|\mathfrak{S}|}{Q}\right)^{-\frac{1}{p}}}{1-D_{1}(p')^{\frac{1}{p'}}p^{\frac{1}{p}}} \left(\int_{\G}\int_{\G}\frac{|u(x)-u(y)|^{p}}{|y^{-1}x|^{Q+sp}}a(x,y)dxdy\right)^{\frac{1}{p}}.
\end{equation}
\end{thm}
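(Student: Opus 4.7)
The plan is to imitate the classical Euclidean technique of Edmunds--Hurri-Syrjänen (\cite{HKP97}): decompose $u(x)$ into its mean on $B(0,|x|)$ and the deviation from that mean, and handle the two pieces with Hölder's inequality and the integral Hardy inequality (Theorem \ref{THM:Hardy1}) respectively.

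First I would set $U(x) := \frac{1}{|B(0,|x|)|}\int_{B(0,|x|)} u(y)\,dy$ and write $u=(u-U)+U$, so by Minkowski's inequality in $L^p\!\left(\G, \tfrac{A(x)}{|x|^{sp}}dx\right)$ the whole game reduces to estimating the two terms separately. For the \emph{mean} term one writes $|U(x)|\le \frac{1}{|B(0,|x|)|}\int_{B(0,|x|)}|u(y)|\,dy$ and applies Theorem \ref{THM:Hardy1} with the choices
\begin{equation*}
q=p,\qquad g(x)=\frac{A(x)}{|B(0,|x|)|^{p}\,|x|^{sp}},\qquad h(x)=\frac{A(x)}{|x|^{sp}}.
\end{equation*}
With these choices the functionals $G(x)$ and $H(x)$ appearing in Theorem \ref{THM:Hardy1} are exactly those in the supremum defining $D_1$, so the integral Hardy inequality yields
\begin{equation*}
\left(\int_\G \frac{A(x)|U(x)|^{p}}{|x|^{sp}}\,dx\right)^{1/p}\le D_1(p')^{1/p'}p^{1/p}\left(\int_\G \frac{A(x)|u(x)|^{p}}{|x|^{sp}}\,dx\right)^{1/p}.
\end{equation*}
The assumption $D_1(p')^{1/p'}p^{1/p}<1$ is exactly what is needed so that this factor can later be absorbed into the left-hand side.

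For the \emph{deviation} term one bounds $|u(x)-U(x)|\le \frac{1}{|B(0,|x|)|}\int_{B(0,|x|)}|u(x)-u(y)|\,dy$ and applies Hölder's inequality in the split
\begin{equation*}
|u(x)-u(y)|=\frac{|u(x)-u(y)|\,a(x,y)^{1/p}}{|y^{-1}x|^{(Q+sp)/p}}\cdot |y^{-1}x|^{(Q+sp)/p}\,a(x,y)^{-1/p}.
\end{equation*}
Using the quasi-triangle inequality of Proposition \ref{prop_quasi_norm1} and the condition $sp+Q>0$ (which is precisely the hypothesis $s>-Q/p$), one has $|y^{-1}x|\le 2C_{|\cdot|}|x|$ for $y\in B(0,|x|)$, so the ``bad'' Hölder factor is estimated by $(2C_{|\cdot|}|x|)^{(Q+sp)/p'}\left(\int_{B(0,|x|)}a^{1-p'}(x,y)\,dy\right)^{1/p'}$; by the definition of $A(x)$ this last integral is $|B(0,|x|)|\,A(x)^{-1/(p-1)}$. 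After raising to the $p$-th power, multiplying by $A(x)/|x|^{sp}$, using the polar formula $|B(0,|x|)|=|\mathfrak S||x|^Q/Q$, and integrating over $\G$, the weight $A(x)$ cancels and one is left with
\begin{equation*}
\int_\G \frac{A(x)|u(x)-U(x)|^{p}}{|x|^{sp}}\,dx\le \frac{(2C_{|\cdot|})^{Q+sp}Q}{|\mathfrak S|}\int_\G\!\int_\G\frac{|u(x)-u(y)|^{p}}{|y^{-1}x|^{Q+sp}}a(x,y)\,dy\,dx,
\end{equation*}
which produces exactly the constant $\bigl((2C_{|\cdot|})^{-Q-sp}|\mathfrak S|/Q\bigr)^{-1/p}$ stated in the theorem.

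Combining the two bounds and absorbing the $U$-term gives the desired inequality after dividing by $1-D_1(p')^{1/p'}p^{1/p}$. The only delicate point is getting the exponents in the Hölder split exactly right so that $A(x)$ reconstructs itself in the estimate of the deviation term and then cancels against the weight $A(x)/|x|^{sp}$; once this bookkeeping is done, the rest is matching the two terms via Theorem \ref{THM:Hardy1}. I expect no further obstacle, since the case $Q<sp$ (equivalently $sp+Q>0$ combined with the smallness of $D_1$) is built in precisely so that both the Hölder bound on $B(0,|x|)$ and the direct integral Hardy inequality are simultaneously available.
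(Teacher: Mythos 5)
Your proposal is correct and follows essentially the same route as the paper: the same decomposition $u=(u-U)+U$ via Minkowski's inequality, the same application of Theorem \ref{THM:Hardy1} with $q=p$, $g(x)=A(x)|B(0,|x|)|^{-p}|x|^{-sp}$, $h(x)=A(x)|x|^{-sp}$ for the mean term, and the same H\"older argument with the bound $|y^{-1}x|\leq 2C_{|\cdot|}|x|$ (using $Q+sp>0$) for the deviation term, with the weight $A(x)$ cancelling exactly as you describe. The only blemish is a typo in the intermediate exponent, which should read $(2C_{|\cdot|}|x|)^{(Q+sp)/p}$ rather than $(2C_{|\cdot|}|x|)^{(Q+sp)/p'}$ given your H\"older split; your final constant is the correct one.
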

\begin{rem}
By rescaling $a_{\lambda}(x,y)=\lambda a(x,y)$ in \eqref{D1}, we see that $D_{1}$ is invariant under this transformation. Namely, we have $A_{\lambda}(x)=\left(\frac{\lambda^{1-p'}}{|B(0,|x|)|}\int_{B(0,|x|)}a^{1-p'}(x,y)dy\right)^{1-p}$ and then $D_{1}(\lambda)=\sup\limits_{x\neq 0}\left[\left(\int_{\G\setminus B(0,|x|)}\frac{A_{\lambda}(y)}{|B(0,|y|)|^{p}|y|^{sp}}dy\right)^{\frac{1}{p}}\left(\int_{B(0,|x|)}\left(\frac{A_{\lambda}(y)}{|y|^{sp}}\right)^{1-p'}dy\right)^{\frac{1}{p'}}\right]=D_{1}$ is independent of $\lambda$. 
\end{rem}
\begin{proof}
By using Minkowski's inequality (see Theorem 5.1.1 in \cite{Gar07}), we have
\begin{equation}\label{J1J2}
    \begin{split}
        &\left(\int_{\G}\frac{A(x)|u(x)|^{p}}{\x^{sp}}dx\right)^{\frac{1}{p}}=\Bigg(\int_{\G}\frac{A(x)}{|x|^{sp}}\Bigg{|}u(x)-\frac{1}{|B(0,\x)|}\int_{B(0,\x)}u(y)dy\\&+\frac{1}{B(0,\x)}\int_{B(0,\x)}u(y)dy\Bigg{|}^{p}dx\Bigg)^{\frac{1}{p}}\\&
        \leq \Bigg(\int_{\G}\frac{A(x)}{\x^{sp}}\Bigg{|}u(x)-\frac{1}{|B(0,\x)|}\int_{B(0,\x)}u(y)dy\Bigg{|}^{p}dx\Bigg)^{\frac{1}{p}}\\&
        +\Bigg(\int_{\G}\frac{A(x)}{|B(0,\x)|^{p}\x^{sp}}\Bigg{|}\int_{B(0,\x)}u(y)dy\Bigg{|}^{p}dx\Bigg)^{\frac{1}{p}}\\&
        =J_{1}+J_{2},
    \end{split}
\end{equation}
where 
\begin{equation}
    J_{1}=\Bigg(\int_{\G}\frac{A(x)}{\x^{sp}}\Bigg{|}u(x)-\frac{1}{|B(0,\x)|}\int_{B(0,\x)}u(y)dy\Bigg{|}^{p}dx\Bigg)^{\frac{1}{p}},
\end{equation}
and 
\begin{equation}
    J_{2}=\Bigg(\int_{\G}\frac{A(x)}{\x^{sp}|B(0,\x)|^{p}}\Bigg{|}\int_{B(0,\x)}u(y)dy\Bigg{|}^{p}dx\Bigg)^{\frac{1}{p}}.
\end{equation}
First, let us show that 
$$J_{1}\leq \left((2C_{|\cdot|})^{-Q-sp}\frac{|\mathfrak{S}|}{Q}\right)^{-\frac{1}{p}}\left(\int_{\G}\int_{\G}\frac{|u(x)-u(y)|^{p}}{|y^{-1}x|^{Q+sp}}a(x,y)dxdy\right)^{\frac{1}{p}}.$$
By using H\"{o}lder's inequality with $\frac{1}{p}+\frac{1}{p'}=1$, we have
\begin{multline}
    \left|\int_{B(0,\x)}(u(x)-u(y))dy\right|\\
    \leq \left(\int_{B(0,\x)}|u(x)-u(y)|^{p}a(x,y)dy\right)^{\frac{1}{p}}\left(\int_{B(0,|x|)}a^{1-p'}(x,y)dy\right)^{\frac{p-1}{p}}.
\end{multline}
By using this fact  with $|y^{-1}x|\leq C_{|\cdot|}(\x+|y|)\leq 2C_{|\cdot|}\x$ for all $y\in B(0,|x|)$ (see Proposition \ref{prop_quasi_norm1}) and $|B(0,|x|)|=\frac{|\mathfrak{S}||x|^{Q}}{Q}$, we have 
\begin{equation*}
    \begin{split}
    &\int_{\G}\int_{\G}\frac{|u(x)-u(y)|^{p}}{|y^{-1}x|^{Q+sp}}a(x,y)dxdy\\&
    \geq \int_{\G}\int_{B(0,\x)}\frac{|u(x)-u(y)|^{p}}{|y^{-1}x|^{Q+sp}}a(x,y)dxdy\\&
    \geq (2C_{|\cdot|})^{-Q-sp}\int_{\G}\frac{1}{|x|^{Q+sp}}dx\int_{B(0,|x|)}|u(x)-u(y)|^{p}a(x,y)dy\\&
    \geq (2C_{|\cdot|})^{-Q-sp}\int_{\G}\frac{\left(\int_{B(0,|x|)}a^{1-p'}(x,y)dy\right)^{1-p}}{|x|^{Q+sp}}dx\left|\int_{B(0,\x)}(u(x)-u(y))dy\right|^{p}dx\\&
    =(2C_{|\cdot|})^{-Q-sp}\int_{\G}\frac{\left(\int_{B(0,|x|)}a^{1-p'}(x,y)dy\right)^{1-p}}{|x|^{Q+sp}}dx\left|u(x)|B(0,|x|)|-\int_{B(0,\x)}u(y)dy\right|^{p}dx\\&
    =(2C_{|\cdot|})^{-Q-sp}\frac{|\mathfrak{S}|}{Q}\int_{\G}\frac{A(x)}{\x^{sp}}\left|u(x)-\frac{1}{|B(0,\x)|}\int_{B(0,\x)}u(y)dy\right|^{p}dx\\&
    =(2C_{|\cdot|})^{-Q-sp}\frac{|\mathfrak{S}|}{Q}J_{1}^{p}.
    \end{split}
\end{equation*}
Finally, we obtain 
\begin{equation}\label{J1}
    J^{p}_{1}\leq \left((2C_{|\cdot|})^{-Q-sp}\frac{|\mathfrak{S}|}{Q}\right)^{-1}\int_{\G}\int_{\G}\frac{|u(x)-u(y)|^{p}}{|y^{-1}x|^{Q+sp}}a(x,y)dxdy.
\end{equation}
By assumption, we have that 
\begin{equation}\label{ass123}
D_{1}=\sup_{x\neq 0}\left[\left(\int_{\G\setminus B(0,|x|)}\frac{A(y)}{|B(0,|y|)|^{p}|y|^{sp}}dy\right)^{\frac{1}{p}}\left(\int_{B(0,|x|)}\left(\frac{A(y)}{|y|^{sp}}\right)^{1-p'}dy\right)^{\frac{1}{p'}}\right]<\infty.
\end{equation}
By taking $g(x)=\frac{A(x)}{|x|^{sp}|B(0,|x|)|^{p}}$, $h(x)=\frac{A(x)}{|x|^{sp}}$, $f=u$ and $p=q$ in Theorem \ref{THM:Hardy1} and combining the last fact with \eqref{ass123}  and \eqref{EQ:constants}, we have  
\begin{equation}
    J_{2}\leq C_{H}\left(\int_{\G}\frac{A(x)|u(x)|^{p}}{|x|^{sp}}dx\right)^{\frac{1}{p}}
    \leq D_{1}(p')^{\frac{1}{p'}}p^{\frac{1}{p}}\left(\int_{\G}\frac{A(x)|u(x)|^{p}}{|x|^{sp}}dx\right)^{\frac{1}{p}}.
\end{equation}
Putting  \eqref{J1} and last fact with $D_{1}(p')^{\frac{1}{p'}}p^{\frac{1}{p}}<1$ in \eqref{J1J2}, we get
\begin{equation*}
   \left(\int_{\G}\frac{A(x)|u(x)|^{p}}{|x|^{sp}}dx\right)^{\frac{1}{p}}\leq \frac{\left((2C_{|\cdot|})^{-Q-sp}\frac{|\mathfrak{S}|}{Q}\right)^{-\frac{1}{p}}}{1-D_{1}(p')^{\frac{1}{p'}}p^{\frac{1}{p}}} \left(\int_{\G}\int_{\G}\frac{|u(x)-u(y)|^{p}}{|y^{-1}x|^{Q+sp}}a(x,y)dxdy\right)^{\frac{1}{p}},
\end{equation*}
completing the proof.
\end{proof}
\begin{cor}\label{cor1}
Under the same assumptions in Theorem \ref{integralhar1} with $a(x,y)=1$, $s>0$ and $p>1$ such that $sp>Q$, we have 
\begin{equation}\label{fracharpar}
     \left(\int_{\G}\frac{|u(x)|^{p}}{|x|^{sp}}dx\right)^{\frac{1}{p}}\leq \frac{\left((2C_{|\cdot|})^{-Q-sp}\frac{|\mathfrak{S}|}{Q}\right)^{-\frac{1}{p}}}{1-\frac{p}{sp+Qp-Q}}\left(\int_{\G}\int_{\G}\frac{|u(x)-u(y)|^{p}}{|y^{-1}x|^{Q+sp}}dxdy\right)^{\frac{1}{p}}.
\end{equation}

\end{cor}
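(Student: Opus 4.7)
The plan is to apply Theorem \ref{integralhar1} with $a(x,y) \equiv 1$ and reduce everything to an explicit calculation of $A(x)$ and $D_1$ in polar coordinates.

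First I would observe that when $a(x,y) \equiv 1$, one has $a^{1-p'}(x,y) \equiv 1$, and so
$$A(x) = \left(\frac{1}{|B(0,|x|)|} \int_{B(0,|x|)} 1\, dy\right)^{1-p} = 1.$$
This makes the left-hand side of the Hardy bound in Theorem \ref{integralhar1} reduce to the claimed weighted $L^p$-norm of $u$, and the right-hand side of the kernel integral reduce to the $|u(x)-u(y)|^p/|y^{-1}x|^{Q+sp}$ term, so the only remaining task is to evaluate $D_1$ explicitly and check the smallness condition $D_1 (p')^{1/p'} p^{1/p} < 1$.

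Next I would compute $D_1$ using $|B(0,|y|)| = |\mathfrak{S}||y|^Q/Q$ and the polar formula \eqref{EQ:polar}. The first factor becomes, for each fixed $x\neq 0$,
$$\int_{\G \setminus B(0,|x|)} \frac{Q^p}{|\mathfrak{S}|^p |y|^{Qp+sp}} dy = \frac{Q^p}{|\mathfrak{S}|^{p-1}} \int_{|x|}^\infty r^{Q-Qp-sp-1}\, dr,$$
which converges precisely when $sp+Qp-Q>0$ and evaluates to $\frac{Q^p |x|^{Q-Qp-sp}}{|\mathfrak{S}|^{p-1}(sp+Qp-Q)}$. Similarly, using $(1/|y|^{sp})^{1-p'} = |y|^{sp/(p-1)}$, the second factor gives
$$\int_{B(0,|x|)} |y|^{sp/(p-1)}\, dy = \frac{|\mathfrak{S}|(p-1)|x|^{(sp+Qp-Q)/(p-1)}}{sp+Qp-Q}.$$
Raising these to the powers $1/p$ and $1/p'$ respectively, the $|x|$-dependence cancels (the exponents sum to $0$) and the factors of $|\mathfrak{S}|$ cancel because $(p-1)/p = 1/p'$, leaving
$$D_1 = \frac{Q(p-1)^{1/p'}}{sp+Qp-Q}.$$

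Finally I would check the smallness hypothesis. Using $(p-1)p' = p$, so $(p-1)^{1/p'}(p')^{1/p'} = p^{1/p'}$, I get $D_1(p')^{1/p'} p^{1/p} = \frac{Qp}{sp+Qp-Q}$, which is strictly less than $1$ exactly when $Q < sp$, matching the hypothesis of the corollary. Substituting $A \equiv 1$ and the explicit value of $D_1 (p')^{1/p'} p^{1/p}$ into the conclusion of Theorem \ref{integralhar1} yields the inequality \eqref{fracharpar}. The only genuinely delicate step is the bookkeeping in the exponents of $|\mathfrak{S}|$, $(p-1)$ and $|x|$, but everything collapses neatly thanks to the identities $1/p+1/p'=1$ and $(p-1)p'=p$.
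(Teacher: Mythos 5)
Your proposal is correct and follows essentially the same route as the paper: set $a\equiv 1$ so that $A\equiv 1$, evaluate both factors of $D_1$ in polar coordinates using $|B(0,|x|)|=\frac{|\mathfrak{S}||x|^{Q}}{Q}$, observe that the powers of $|x|$ and $|\mathfrak{S}|$ cancel to give $D_1=\frac{Q(p-1)^{1/p'}}{sp+Qp-Q}$, and check $D_1(p')^{1/p'}p^{1/p}=\frac{Qp}{sp+Qp-Q}<1$ precisely when $Q<sp$ before invoking Theorem \ref{integralhar1}. Note only that substituting this value gives the denominator $1-\frac{Qp}{sp+Qp-Q}$, whereas \eqref{fracharpar} displays $1-\frac{p}{sp+Qp-Q}$; your computation agrees with the one in the paper's own proof, so this is a misprint in the statement of the corollary rather than a gap in your argument.
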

\begin{proof}
Let us check the condition \eqref{D1} with $A(x)=a(x,y)=1$. By noting that the $|B(0,|x|)|=\frac{|\mathfrak{S}||x|^{Q}}{Q}$ and by using $Q<sp$, we have 
\begin{equation*}
\begin{split}
 \left(\int_{\G\setminus B(0,|x|)}\frac{1}{|B(0,|y|)|^{p}|y|^{sp}}dy\right)^{\frac{1}{p}}\left(\int_{B(0,|x|)}|y|^{sp'}dy\right)^{\frac{1}{p'}}&=\left(\frac{Q^{p}|\mathfrak{S}|^{-p+1}|x|^{-sp-Qp+Q}}{sp+Qp-Q}\right)^{\frac{1}{p}}
 \left(\frac{|\mathfrak{S}||x|^{sp'+Q}}{sp'+Q}\right)^{\frac{1}{p'}}\\&
 =\frac{Q|\mathfrak{S}|^{-1+\frac{1}{p}+\frac{1}{p'}}|x|^{-s-Q+\frac{Q}{p}+s+\frac{Q}{p'}}}{(sp+Qp-Q)^{\frac{1}{p}}(sp'+Q)^{\frac{1}{p'}}}\\&
 =\frac{Q}{(sp+Qp-Q)^{\frac{1}{p}}\left(s\frac{p}{p-1}+Q\right)^{\frac{1}{p'}}}\\&
 =\frac{Q(p-1)^{\frac{1}{p'}}}{sp+Qp-Q},
\end{split}
\end{equation*}
that is,
\begin{equation}
\begin{split}
    D_{1}&=\sup_{x\neq 0}\left[\left(\int_{\G\setminus B(0,|x|)}\frac{1}{|B(0,|y|)|^{p}|y|^{sp}}dy\right)^{\frac{1}{p}}\left(\int_{B(0,|x|)}\left(\frac{1}{|y|^{sp}}\right)^{1-p'}dy\right)^{\frac{1}{p'}}\right]\\&
    =\frac{Q(p-1)^{\frac{1}{p'}}}{sp+Qp-Q}<\infty.
\end{split}
\end{equation}
Finally, by using $sp>Q$ we have 
\begin{equation}
  D_{1}(p')^{\frac{1}{p'}}p^{\frac{1}{p}}=\frac{Q(p-1)^{\frac{1}{p'}}}{sp+Qp-Q}\frac{p}{(p-1)^{\frac{1}{p'}}}\stackrel{sp>Q}< \frac{Qp}{Qp}=1,
\end{equation}
so that Theorem \ref{integralhar1} can be applied.
\end{proof}
\begin{rem}
In Corollary \ref{cor1}, we have  condition the $Q<sp$, which is stronger than condition $s>-\frac{Q}{p}$ in Theorem \ref{integralhar1}. 
\end{rem}

\begin{cor}[Uncertainty principle]
Let $\G$ be homogeneous Lie group with homogeneous dimension $Q$,  and let $p>1$ and $s>0$ be such that $Q<sp$. Then we have
\begin{equation}
    \int_{\G}|u(x)|^{2}dx\leq  C\left(\int_{\G}\int_{\G}\frac{|u(x)-u(y)|^{p}}{|y^{-1}x|^{Q+sp}}dxdy\right)^{\frac{1}{p}} \left(\int_{\G}|x|^{sp'}|u(x)|^{p'}dx\right)^{\frac{1}{p'}}.
\end{equation}
\end{cor}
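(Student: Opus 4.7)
The plan is to derive the uncertainty principle as a direct consequence of Corollary \ref{cor1} combined with H\"older's inequality, following the classical template of uncertainty-type estimates (where one balances a weight $|x|^{-s}$ against $|x|^{s}$).

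First I would write the trivial factorisation
\begin{equation*}
|u(x)|^{2} = \frac{|u(x)|}{|x|^{s}}\cdot |x|^{s}|u(x)|,
\end{equation*}
which is legitimate for $x\neq 0$ and hence almost everywhere. Integrating over $\G$ and applying H\"older's inequality with exponents $p$ and $p'=p/(p-1)$ produces
\begin{equation*}
\int_{\G}|u(x)|^{2}\,dx \leq \left(\int_{\G}\frac{|u(x)|^{p}}{|x|^{sp}}\,dx\right)^{\frac{1}{p}}\left(\int_{\G}|x|^{sp'}|u(x)|^{p'}\,dx\right)^{\frac{1}{p'}}.
\end{equation*}

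Next, the hypotheses $p>1$, $s>0$ and $Q<sp$ are precisely those required by Corollary \ref{cor1}, so I can estimate the first factor by the fractional Hardy inequality \eqref{fracharpar}, yielding
\begin{equation*}
\left(\int_{\G}\frac{|u(x)|^{p}}{|x|^{sp}}\,dx\right)^{\frac{1}{p}} \leq C\left(\int_{\G}\int_{\G}\frac{|u(x)-u(y)|^{p}}{|y^{-1}x|^{Q+sp}}\,dx\,dy\right)^{\frac{1}{p}},
\end{equation*}
with $C$ the explicit constant appearing in \eqref{fracharpar}. Substituting this into the H\"older bound produces exactly the claimed inequality.

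There is no real obstacle here; the substantive work is already contained in Corollary \ref{cor1}. The only point deserving a brief check is that the regime $Q<sp$ (rather than the more common $Q>sp$) makes the weighted Hardy estimate available in the form we need, and that the factorisation $|u|^{2}=|x|^{-s}|u|\cdot |x|^{s}|u|$ respects the H\"older conjugacy $1/p+1/p'=1$ so that the two right-hand factors match the statement verbatim.
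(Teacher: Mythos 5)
Your proposal is correct and is essentially identical to the paper's own proof: the paper also factorises $|u(x)|^{2}=|x|^{s}|u(x)|\cdot|x|^{-s}|u(x)|$, applies H\"older's inequality with exponents $p$ and $p'$, and then bounds the first factor via the fractional Hardy inequality \eqref{fracharpar} from Corollary \ref{cor1}.
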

\begin{proof}
By using H\"{o}lder's inequality and \eqref{fracharpar}, we get
\begin{equation*}
    \begin{split}
        \int_{\G}|u(x)|^{2}dx&=\int_{\G}|x|^{s}|u(x)||x|^{-s}|u(x)|dx\leq \left(\int_{\G}|x|^{-sp}|u(x)|^{p}dx\right)^{\frac{1}{p}}\left(\int_{\G}|x|^{sp'}|u(x)|^{p'}dx\right)^{\frac{1}{p'}}\\&
        \leq C\left(\int_{\G}\int_{\G}\frac{|u(x)-u(y)|^{p}}{|y^{-1}x|^{Q+sp}}dxdy\right)^{\frac{1}{p}} \left(\int_{\G}|x|^{sp'}|u(x)|^{p'}dx\right)^{\frac{1}{p'}},
    \end{split}
\end{equation*}
completing the proof.
\end{proof}

\subsection{Fractional Hardy-Sobolev type inequality}
In this subsection, we show fractional Hardy-Sobolev inequality.
\begin{thm}\label{HSin}
Let $\mathbb{G}$ be  a homogeneous Lie groups with homogeneous dimension $Q$, and let $|\cdot|$ be a quasi-norm on $\G$. Let $1<p\leq q<\infty$ be such that $s>-\frac{Q}{p}$.  Assume  that $v(x)$ and $z(x)$ are  non-negative measurable functions on $\G$ such that 
\begin{equation}\label{A1}
    C(|x|):=\int_{B(0,|x|)}v(y)dy,
\end{equation}
and
\begin{equation}\label{A2}
    A(x):=\left(\frac{1}{|B(0,|x|)|}\int_{B(0,|x|)}\left(\frac{v^{p}(y)}{z(y)}\right)^{\frac{1}{p-1}}dy\right)^{-\frac{q}{p'}}\left(\frac{C(|x|)}{|B(0,|x|)|}\right)^{q}v(x).
\end{equation}
Suppose that $D_{1}(q')^{\frac{1}{q'}} q^\frac{1}{q}<1$ where
\begin{equation}\label{D1HS1}
D_{1}=\sup_{x\neq 0}\left[\left(\int_{\G\setminus B(0,|x|)}\frac{A(y)C^{-q}(|y|)}{|y|^{sq}}dy\right)^{\frac{1}{q}}\left(\int_{B(0,|x|)}\left(\frac{A(y)v^{-q}(y)}{|y|^{sq}}\right)^{1-q'}dy\right)^{\frac{1}{q'}}\right]<\infty.
\end{equation}
Then we have
\begin{equation}
    \left(\int_{\G}\frac{A(x)|u(x)|^{q}}{|x|^{sq}}dx\right)^{\frac{1}{q}}\leq \frac{(2C_{|\cdot|})^{\frac{Q+sp}{p}}\left(\frac{|\mathfrak{S}|}{Q}\right)^{-\frac{1}{p}}}{1-D_{1}(q')^{\frac{1}{q'}}q^{\frac{1}{q}}}\left(\int_{\G}\left(\int_{\G}\frac{|u(x)-u(y)|^{p}}{|y^{-1}x|^{Q+sp}}z(x)dx\right)^{\frac{q}{p}}v(y)dy\right)^{\frac{1}{q}}.
\end{equation}
\end{thm}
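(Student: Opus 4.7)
The plan is to mimic the decomposition strategy used in the proof of Theorem \ref{integralhar1}, now adapted to the asymmetric weights $v, z$ and the two-parameter range $1 < p \leq q < \infty$. Introduce the $v$-weighted average over the quasi-ball
\[
M(x) := \frac{1}{C(|x|)} \int_{B(0,|x|)} u(y)\, v(y)\, dy,
\]
whose form is dictated by the structure of $A(x)$ in \eqref{A2}. Writing $u(x) = (u(x) - M(x)) + M(x)$ and applying Minkowski's inequality in $L^{q}$ produces a bound of the form $J_{1} + J_{2}$ for the left-hand side, where $J_{1}$ collects the oscillation term $u(x) - M(x)$ and $J_{2}$ collects the averaged term $M(x)$.

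For $J_{2}$, I would apply the integral Hardy inequality of Theorem \ref{THM:Hardy1} with both exponents equal to $q$, taking $f(y) = u(y) v(y)$, $g(x) = A(x) / (|x|^{sq} C^{q}(|x|))$, and $h(y) = A(y) / (|y|^{sq} v^{q}(y))$. With these choices, $\int |f|^{q} h\, dy$ collapses exactly to $\int_{\G} A(x) |u(x)|^{q} |x|^{-sq} dx$, and the quantities $G(x), H(x)$ of Theorem \ref{THM:Hardy1} match precisely the two factors appearing in \eqref{D1HS1}. The bound \eqref{EQ:constants} then yields
\[
J_{2} \leq D_{1} (q')^{1/q'} q^{1/q} \left(\int_{\G} \frac{A(x) |u(x)|^{q}}{|x|^{sq}}\, dx\right)^{1/q},
\]
and the hypothesis $D_{1}(q')^{1/q'} q^{1/q} < 1$ allows $J_{2}$ to be absorbed into the left-hand side.

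The heart of the argument is the bound for $J_{1}$. Using $u(x) - M(x) = C(|x|)^{-1} \int_{B(0,|x|)} (u(x) - u(y)) v(y)\, dy$ and applying H\"older's inequality in the $y$-variable with the split
\[
(u(x) - u(y)) v(y) = \left(\frac{|u(x) - u(y)|^{p}\, z(y)}{|y^{-1}x|^{Q+sp}}\right)^{1/p} \cdot \left(\frac{v^{p}(y)}{z(y)}\right)^{1/p} |y^{-1}x|^{(Q+sp)/p},
\]
is engineered precisely so that the $L^{p'}$ factor reproduces the integrand $(v^{p}/z)^{1/(p-1)}$ appearing in \eqref{A2}. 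Using $|y^{-1}x| \leq 2 C_{|\cdot|} |x|$ for $y \in B(0,|x|)$ from Proposition \ref{prop_quasi_norm1} and the polar identity $|B(0,|x|)| = |\mathfrak{S}| |x|^{Q}/Q$, the substitution of $A(x)$ from \eqref{A2} makes all remaining powers of $|x|$ cancel, leaving
\[
J_{1}^{q} \leq (2 C_{|\cdot|})^{q(Q+sp)/p} \left(\frac{|\mathfrak{S}|}{Q}\right)^{-q/p} \int_{\G} v(x) \left(\int_{B(0,|x|)} \frac{|u(x) - u(y)|^{p} z(y)}{|y^{-1}x|^{Q+sp}}\, dy\right)^{q/p} dx.
\]
Enlarging the inner domain from $B(0,|x|)$ to all of $\G$ and relabelling $x \leftrightarrow y$ (which is legitimate because property (a) of the quasi-norm gives $|x^{-1}y| = |y^{-1}x|$) converts this into the right-hand side of the theorem, with outer variable $y$ weighted by $v(y)$ and inner variable $x$ weighted by $z(x)$.

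Combining the $J_{1}$ bound with the absorbable $J_{2}$ bound and rearranging gives the claimed inequality with the stated constant. The main obstacle is the $J_{1}$ step: the H\"older split must be calibrated so that the $L^{p'}$ factor reproduces exactly $(v^{p}/z)^{1/(p-1)}$, and then the extraneous factors of $|x|$ introduced by $|y^{-1}x| \leq 2 C_{|\cdot|} |x|$ must cancel precisely against the scaling of $|B(0,|x|)|$ and $C^{q}(|x|)$ in the pre-factor of $A(x)$, leaving only the weight $v$ outside. Once this algebraic bookkeeping is correct, the $J_{2}$ step is essentially dictated by the definition of $D_{1}$ in \eqref{D1HS1}.
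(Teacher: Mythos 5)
Your proposal is correct and follows essentially the same route as the paper's proof: the same Minkowski decomposition of $u$ around the $v$-weighted average $\frac{1}{C(|x|)}\int_{B(0,|x|)}uv\,dy$, the same H\"older split producing the $(v^{p}/z)^{1/(p-1)}$ factor (you merely keep the kernel $|y^{-1}x|^{-(Q+sp)}$ inside the split instead of extracting it first via $|y^{-1}x|\leq 2C_{|\cdot|}|y|$, which is an equivalent bookkeeping choice), and the same application of Theorem \ref{THM:Hardy1} with $p=q$ and $f=uv$ for the averaged term. The constants and the absorption argument match the paper's.
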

\begin{rem}
If $p=q$, $v=1$ (that is, we have $a(x,y)=z(y)$ in Theorem \ref{integralhar1}) and under the same assumption as in Theorem \ref{HSin}, we obtain the fractional Hardy inequality
\begin{equation}
     \left(\int_{\G}\frac{A(x)|u(x)|^{p}}{|x|^{sp}}dx\right)^{\frac{1}{p}}\leq C\left(\int_{\G}\int_{\G}\frac{|u(x)-u(y)|^{p}}{|y^{-1}x|^{Q+sp}}z(x)dxdy\right)^{\frac{1}{p}}.
\end{equation}
\end{rem}
\begin{proof}[Proof of Theorem \ref{HSin}]
The strategy of the proof is similar to  Theorem \ref{integralhar1}. By using H\"older's inequality, we have
\begin{multline}\label{ASD}
    \left(\int_{B(0,|y|)}|u(x)-u(y)|v(x)dx\right)^{q}\\
    \leq \left(\int_{B(0,|y|)}|u(x)-u(y)|^{p}z(x)dx\right)^{\frac{q}{p}}\left(\int_{B(0,|y|)}\left(\frac{v^{p}(x)}{z(x)}\right)^{\frac{1}{p-1}}dx\right)^{\frac{q}{p'}}.
\end{multline}
By noting that the $|B(0,\x)|=\frac{|\S||x|^{Q}}{Q}$, we get 
\begin{equation}\label{A123}
    \begin{split}
        \left(\int_{B(0,|y|)}\left(\frac{v^{p}(x)}{z(x)}\right)^{\frac{1}{p-1}}dx\right)^{-\frac{q}{p'}}\frac{(C(|y|))^{q}v(y)}{|y|^{(Q+sp)\frac{q}{p}}}&=\left(\frac{|B(0,|y|)|}{|B(0,|y|)|}\int_{B(0,|y|)}\left(\frac{v^{p}(x)}{z(x)}\right)^{\frac{1}{p-1}}dx\right)^{-\frac{q}{p'}}\\&
        \times\left(\frac{|B(0,|y|)|C(|y|)}{|B(0,|y|)|}\right)^{q}\frac{v(y)}{|y|^{(Q+sp)\frac{q}{p}}}\\&
        \stackrel{\eqref{A2}}=A(y)\frac{|B(0,|y|)|^{\frac{q}{p}}}{|y|^{(Q+sp)\frac{q}{p}}}\\&
        =\frac{|\S|^{\frac{q}{p}}}{Q^{\frac{q}{p}}|y|^{sq}}A(y)
    \end{split}
\end{equation}
and by using  the last fact with \eqref{ASD} and $|y^{-1}x|\leq 2C_{|\cdot|}|y|$ for all $x\in B(0,|y|)$, we have\begin{equation*}
    \begin{split}
     &\int_{\G}\left(\int_{\G}\frac{|u(x)-u(y)|^{p}}{|y^{-1}x|^{Q+sp}}z(x)dx\right)^{\frac{q}{p}}v(y)dy\geq \int_{\G}\left(\int_{B(0,|y|)}\frac{|u(x)-u(y)|^{p}}{|y^{-1}x|^{Q+sp}}z(x)dx\right)^{\frac{q}{p}}v(y)dy \\&
     \geq (2C_{|\cdot|})^{(-Q-sp)\frac{q}{p}}\int_{\G}\left(\int_{B(0,|y|)}|u(x)-u(y)|^{p}z(x)dx\right)^{\frac{q}{p}}\frac{v(y)}{|y|^{(Q+sp)\frac{q}{p}}}dy\\&
     \geq (2C_{|\cdot|})^{(-Q-sp)\frac{q}{p}}\int_{\G}\left(\int_{B(0,|y|)}|u(x)-u(y)|v(x)dx\right)^{q}\left(\int_{B(0,|y|)}\left(\frac{v^{p}(x)}{z(x)}\right)^{\frac{1}{p-1}}dx\right)^{-\frac{q}{p'}}\frac{v(y)}{|y|^{(Q+sp)\frac{q}{p}}}dy\\&
     \geq (2C_{|\cdot|})^{(-Q-sp)\frac{q}{p}}\int_{\G}\left|u(y)\int_{B(0,|y|)}v(x)dx-\int_{B(0,|y|)}u(x)v(x)dx\right|^{q}\left(\int_{B(0,|y|)}\left(\frac{v^{p}(x)}{z(x)}\right)^{\frac{1}{p-1}}dx\right)^{-\frac{q}{p'}}\frac{v(y)}{|y|^{(Q+sp)\frac{q}{p}}}dy\\&
     \stackrel{\eqref{A1}}=(2C_{|\cdot|})^{(-Q-sp)\frac{q}{p}}\int_{\G}\left|u(y)C(|y|)-\int_{B(0,|y|)}u(x)v(x)dx\right|^{q}\left(\int_{B(0,|y|)}\left(\frac{v^{p}(x)}{z(x)}\right)^{\frac{1}{p-1}}dx\right)^{-\frac{q}{p'}}\frac{v(y)}{|y|^{(Q+sp)\frac{q}{p}}}dy\\&
     \stackrel{\eqref{A123}}=(2C_{|\cdot|})^{(-Q-sp)\frac{q}{p}}\left(\frac{|\mathfrak{S}|}{Q}\right)^{\frac{q}{p}}\int_{\G}\left|u(y)-\frac{1}{C(|y|)}\int_{B(0,|y|)}u(x)v(x)dx\right|^{q}\frac{A(y)}{|y|^{sq}}dy\\&
     =(2C_{|\cdot|})^{(-Q-sp)\frac{q}{p}}\left(\frac{|\mathfrak{S}|}{Q}\right)^{\frac{q}{p}}J^{q}_{1},
    \end{split}
\end{equation*}
where 
$$J_{1}:=\left(\int_{\G}\left|u(y)-\frac{1}{C(|y|)}\int_{B(0,|y|)}u(x)v(x)dx\right|^{q}\frac{A(y)}{|y|^{sq}}dy\right)^{\frac{1}{q}}.$$
By using \eqref{D1HS1} and \eqref{EQ:Hardy1} (with $p=q$, $f=uv$ and weights $g(x)=\frac{A(x)}{C^{q}(|x|)|x|^{sq}}$ and $h(x)=\frac{A(x)}{\x^{sq}v^{q}(x)}$), we have
\begin{equation*}
\begin{split}
    J_{2}:&=\left(\int_{\G}\frac{A(y)C^{-q}(|y|)}{|y|^{sq}}\left(\int_{B(0,|y|)}u(x)v(x)dx\right)^{q}dy\right)^{\frac{1}{q}}\leq D_{1}(q')^{\frac{1}{q'}}q^{\frac{1}{q}} \left(\int_{\G}\frac{A(x)|u(x)|^{q}}{|x|^{sq}}dx\right)^{\frac{1}{q}}.
\end{split}
\end{equation*}
Hence, by combining last two facts with Minkowski's inequality, we obtain
\begin{equation*}
    \begin{split}
        & \left(\int_{\G}\frac{A(y)|u(y)|^{q}}{|y|^{sq}}dy\right)^{\frac{1}{q}}=\Bigg(\int_{\G}\frac{A(y)}{|y|^{sq}}\Bigg{|}u(y)-\frac{1}{C(|y|)}\int_{B(0,|y|)}u(x)v(x)dx\\&
        +\frac{1}{C(|y|)}\int_{B(0,|y|)}u(x)v(x)dx\Bigg{|}^{q}dy\Bigg)^{\frac{1}{q}}\\&
        \leq \Bigg(\int_{\G}\frac{A(y)}{|y|^{sq}}\Bigg{|}u(y)-\frac{1}{C(|y|)}\int_{B(0,|y|)}u(x)v(x)dx\Bigg{|}^{q}dy\Bigg)^{\frac{1}{q}}
        +\Bigg(\int_{\G}\frac{A(y)}{C^{q}(|y|)|y|^{sq}}\Bigg{|}\int_{B(0,|y|)}u(x)v(x)dx\Bigg{|}^{q}dy\Bigg)^{\frac{1}{q}}\\&
        =J_{1}+J_{2}\\&
        \leq (2C_{|\cdot|})^{\frac{Q+sp}{p}}\left(\frac{|\mathfrak{S}|}{Q}\right)^{-\frac{1}{p}}\left(\int_{\G}\left(\int_{\G}\frac{|u(x)-u(y)|^{p}}{|y^{-1}x|^{Q+sp}}z(x)dx\right)^{\frac{q}{p}}v(y)dy\right)^{\frac{1}{q}}+ D_{1}(q')^{\frac{1}{q'}}q^{\frac{1}{q}} \left(\int_{\G}\frac{A(y)|u(y)|^{q}}{|y|^{sq}}dy\right)^{\frac{1}{q}},
    \end{split}
\end{equation*}
it means by using $D_{1}(p')^{\frac{1}{q'}}p^{\frac{1}{q}}<1$, we get
\begin{equation*}
    \left(\int_{\G}\frac{A(x)|u(x)|^{q}}{|x|^{sq}}dx\right)^{\frac{1}{q}}\leq \frac{(2C_{|\cdot|})^{\frac{Q+sp}{p}}\left(\frac{|\mathfrak{S}|}{Q}\right)^{-\frac{1}{p}}}{1-D_{1}(q')^{\frac{1}{q'}}q^{\frac{1}{q}}}\left(\int_{\G}\left(\int_{\G}\frac{|u(x)-u(y)|^{p}}{|y^{-1}x|^{Q+sp}}z(x)dx\right)^{\frac{q}{p}}v(y)dy\right)^{\frac{1}{q}},
\end{equation*}
completing the proof.
\end{proof}
\subsection{Logarithmic Hardy-Sobolev type inequality}
Firstly, let us recall logarithmic H\"older's inequality on homogeneous Lie groups.
\begin{lem}[Logarithmic H\"older inequality, \cite{KRS19,CKR21}]\label{holder}
Let $\G$ be a homogeneous Lie group. Let $u\in L^{p}(\mathbb{G})\cap L^{q}(\mathbb{G})\setminus\{0\}$ with some $1<p<q< \infty.$ 
Then we have
\begin{equation}\label{holdernn}
\int_{\mathbb{G}}\frac{|u|^{p}}{\|u\|^{p}_{L^{p}(\mathbb{G})}}\log\left(\frac{|u|^{p}}{\|u\|^{p}_{L^{p}(\mathbb{G})}}\right)dx\leq \frac{q}{q-p}\log\left(\frac{\|u\|^{p}_{L^{q}(\mathbb{G})}}{\|u\|^{p}_{L^{p}(\mathbb{G})}}\right).
\end{equation}
\end{lem}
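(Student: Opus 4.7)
The plan is to reduce the inequality to a single application of Jensen's inequality for the concave logarithm, taken with respect to a carefully chosen probability measure. The result has nothing to do with the group structure of $\G$ beyond the fact that $dx$ is a positive measure, so I expect a clean probabilistic argument to go through verbatim from the Euclidean case.

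The first step is normalisation: set $d\mu := \frac{|u(x)|^{p}}{\|u\|^{p}_{L^{p}(\G)}}\,dx$. Since $u\in L^p(\G)\setminus\{0\}$, $d\mu$ is a probability measure on $\G$. With this choice, the left-hand side of \eqref{holdernn} is exactly $\int_{\G}\log\bigl(\tfrac{|u|^{p}}{\|u\|^{p}_{L^{p}}}\bigr)\,d\mu$, i.e.\ the $\mu$-expectation of $\log F$ for $F=\tfrac{|u|^{p}}{\|u\|^{p}_{L^{p}}}$.

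The key step is to choose an exponent $\alpha>0$ and apply Jensen's inequality to the concave function $\log$:
\begin{equation*}
\int_{\G}\log\bigl(F^{\alpha}\bigr)\,d\mu \;\le\; \log\!\left(\int_{\G}F^{\alpha}\,d\mu\right).
\end{equation*}
The left-hand side equals $\alpha\int_{\G}\log F\,d\mu$. The right-hand side evaluates to $\log\!\left(\|u\|^{p(\alpha+1)}_{L^{p(\alpha+1)}}/\|u\|^{p(\alpha+1)}_{L^{p}}\right)$. The natural choice is $\alpha+1=q/p$, i.e.\ $\alpha=\frac{q-p}{p}>0$; this is legal since $q>p$ and ensures the integral on the right involves precisely $\|u\|_{L^{q}(\G)}$, for which we have control by hypothesis. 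Dividing through by $\alpha$ then rearranges to the desired bound $\frac{q}{q-p}\log\bigl(\|u\|^{p}_{L^{q}}/\|u\|^{p}_{L^{p}}\bigr)$.

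There is no real obstacle here; the only subtlety is verifying integrability so that Jensen's inequality applies unambiguously. Since $u\in L^{p}(\G)\cap L^{q}(\G)$, the quantity $\int_{\G}F^{\alpha}\,d\mu=\|u\|^{q}_{L^{q}}/\|u\|^{q}_{L^{p}}$ is finite, and the negative part of $\log F$ is integrable against $d\mu$ (it is controlled by $F\log F\ge -e^{-1}$, so $(\log F)_{-}\in L^{1}(d\mu)$ because $d\mu$ has total mass one). Hence both sides of the Jensen inequality make sense in $[-\infty,+\infty)$, and the resulting estimate is the claimed logarithmic Hölder inequality on $\G$. No properties of $\G$ beyond those of a measure space are used, which is consistent with the statement being essentially a measure-theoretic fact quoted from \cite{KRS19,CKR21}.
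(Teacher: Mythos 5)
Your argument is correct, and it is worth noting that the paper itself gives no proof of this lemma: it is recalled verbatim from \cite{KRS19,CKR21}, and the Jensen-type argument you give (apply Jensen's inequality for the concave logarithm to $F^{\alpha}$ with respect to the probability measure $d\mu=|u|^{p}\|u\|_{L^{p}(\G)}^{-p}\,dx$ and choose $\alpha=\frac{q-p}{p}$ so that $p(\alpha+1)=q$) is exactly the standard proof appearing in those references; as you observe, no group structure is used. The one inaccuracy is your parenthetical justification that $(\log F)_{-}\in L^{1}(d\mu)$: the pointwise bound $F\log F\geq -e^{-1}$ controls $(\log F)_{-}F$ with respect to $dx$, not with respect to $d\mu$, and since $\G$ has infinite Haar measure this does not by itself give integrability of the negative part. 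This is harmless, however, because if $\int_{\G}(\log F)_{-}\,d\mu=+\infty$ then the left-hand side of \eqref{holdernn} equals $-\infty$ and the inequality holds trivially, while Jensen's inequality only requires $F^{\alpha}\in L^{1}(d\mu)$, which follows from $u\in L^{q}(\G)$, together with the bound $(\log F^{\alpha})_{+}\leq F^{\alpha}$ ensuring the left-hand side is well defined in $[-\infty,+\infty)$.
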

Let us now show the logarithmic fractional Hardy-Sobolev type inequality on homogeneous Lie groups.
\begin{thm}\label{logHS}
Let $\mathbb{G}$ be a homogeneous Lie group of homogeneous dimension $Q$ and let $|\cdot|$ be a quasi-norm on $\G$. Assume  that $v(x)$ and $z(x)$ are  non-negative measurable functions on $\G$. Let $1<p< q<\infty$ be such that $s>-\frac{Q}{p}$.  Suppose that $D_{1}(p')^{\frac{1}{p'}}(p)^{\frac{1}{q}}<1$ where
\begin{equation}\label{D1HS12}
D_{1}=\sup_{x\neq 0}\left[\left(\int_{\G\setminus B(0,|x|)}\frac{A(y)C^{-q}(|y|)}{|y|^{sq}}dy\right)^{\frac{1}{q}}\left(\int_{B(0,|x|)}\left(\frac{A(y)v^{-q}(y)}{|y|^{sq}}\right)^{1-q'}dy\right)^{\frac{1}{q'}}\right]<\infty,
\end{equation}
\begin{equation}\label{A3}
    C(|x|):=\int_{B(0,|x|)}v(y)dy,
\end{equation}
and
\begin{equation}\label{A4}
    A(x):=\left(\frac{1}{|B(0,|x|)|}\int_{B(0,|x|)}\left(\frac{v^{p}(y)}{z(y)}\right)^{\frac{1}{p-1}}dy\right)^{-\frac{q}{p'}}\left(\frac{C(|x|)}{|B(0,|x|)|}\right)^{q}v(x).
\end{equation}
Then we have
\begin{equation}
    \int_{\mathbb{G}}\frac{\left(\frac{A^{\frac{1}{q}}|u|}{|x|^{s}}\right)^{p}}{\left\|\frac{A^{\frac{1}{q}}u}{|x|^{s}}\right\|^{p}_{L^{p}(\mathbb{G})}}\log\left(\frac{\left(\frac{A^{\frac{1}{q}}|u|}{|x|^{s}}\right)^{p}}{\left\|\frac{A^{\frac{1}{q}}u}{|x|^{s}}\right\|^{p}_{L^{p}(\mathbb{G})}}\right)dx\leq \frac{q}{q-p}\log\left(C\frac{\left(\int_{\G}\left(\int_{\G}\frac{|u(x)-u(y)|^{p}}{|y^{-1}x|^{Q+sp}}z(x)dx\right)^{\frac{q}{p}}v(y)dy\right)^{\frac{p}{q}}}{\left\|\frac{A^{\frac{1}{q}}u}{|x|^{s}}\right\|^{p}_{L^{p}(\mathbb{G})}}\right),
\end{equation}
where $C$ is a positive constant independent of $u$.
\end{thm}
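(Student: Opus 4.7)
The plan is to combine the logarithmic Hölder inequality (Lemma \ref{holder}) with the fractional Hardy--Sobolev type bound (Theorem \ref{HSin}) applied to the single auxiliary function
\[
f(x):=\frac{A^{1/q}(x)\,|u(x)|}{|x|^{s}}.
\]
The structure of the claim makes this a two-step reduction: the left-hand side is exactly the $L^{p}$--$\log$-quotient of $f$, and the right-hand side is, up to a constant, $\frac{q}{q-p}\log\bigl(\|f\|_{L^{q}}^{p}/\|f\|_{L^{p}}^{p}\bigr)$ after replacing $\|f\|_{L^{q}}$ by the double-integral on the right.

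First I would check the integrability hypotheses: since $u\in L^{p}(\mathbb{G})\cap L^{q}(\mathbb{G})$ is implicit (otherwise both sides are trivial or $-\infty$), the same holds for $f$ provided $A^{1/q}/|x|^{s}$ is locally sensible, which is automatic from the definitions \eqref{A3}--\eqref{A4}. I would then apply Lemma \ref{holder} with the exponents $p$ and $q$ (and the given constraint $1<p<q<\infty$) to $f$, obtaining
\[
\int_{\mathbb{G}}\frac{|f|^{p}}{\|f\|^{p}_{L^{p}(\mathbb{G})}}\log\!\left(\frac{|f|^{p}}{\|f\|^{p}_{L^{p}(\mathbb{G})}}\right)dx\leq \frac{q}{q-p}\log\!\left(\frac{\|f\|^{p}_{L^{q}(\mathbb{G})}}{\|f\|^{p}_{L^{p}(\mathbb{G})}}\right).
\]
By the definition of $f$, the $L^{q}$-norm appearing on the right is
\[
\|f\|_{L^{q}(\mathbb{G})}^{q}=\int_{\mathbb{G}}\frac{A(x)|u(x)|^{q}}{|x|^{sq}}\,dx,
\]
which is precisely the quantity controlled by Theorem \ref{HSin} under the hypothesis \eqref{D1HS12}.

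Next I would invoke Theorem \ref{HSin}, whose hypotheses are exactly those imposed in the statement (with \eqref{A1}--\eqref{A2} replaced by the identical \eqref{A3}--\eqref{A4} and the $D_{1}$-smallness assumption). This yields a constant $C>0$, depending only on $Q,s,p,q,C_{|\cdot|},|\mathfrak{S}|$ and $D_{1}$, such that
\[
\|f\|_{L^{q}(\mathbb{G})}\leq C\left(\int_{\G}\left(\int_{\G}\frac{|u(x)-u(y)|^{p}}{|y^{-1}x|^{Q+sp}}z(x)\,dx\right)^{q/p}v(y)\,dy\right)^{1/q}.
\]
Raising both sides to the $p$-th power and using the monotonicity of the logarithm (which preserves the inequality inside the $\log$ on the right), substituting back into the logarithmic Hölder estimate finishes the argument.

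There is no real obstacle: the whole proof is essentially a two-line combination once the right test function $f$ is chosen. The only point requiring a bit of care is to match the smallness hypothesis in the theorem as stated with the one needed to invoke Theorem \ref{HSin} (namely $D_{1}(q')^{1/q'}q^{1/q}<1$); under the stated hypothesis one just needs to verify that the Hardy--Sobolev inequality of the previous subsection is directly applicable with the same weight choices \eqref{A3}--\eqref{A4}, after which the proof is complete.
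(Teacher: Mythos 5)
Your proposal is correct and follows essentially the same route as the paper: apply the logarithmic H\"older inequality (Lemma \ref{holder}) to the function $A^{1/q}|u|/|x|^{s}$ and then control its $L^{q}$-norm via the fractional Hardy--Sobolev type inequality of Theorem \ref{HSin}. Your observation about reconciling the smallness hypothesis $D_{1}(p')^{1/p'}p^{1/q}<1$ with the condition $D_{1}(q')^{1/q'}q^{1/q}<1$ needed for Theorem \ref{HSin} is a fair catch of a discrepancy in the statement itself, but it does not alter the argument.
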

\begin{proof}
By combining the logarithmic H\"older inequality and the fractional Hardy-Sobolev type inequality, we get
\begin{equation}
    \begin{split}
        &\int_{\mathbb{G}}\frac{\left(\frac{A^{\frac{1}{q}}u}{|x|^{s}}\right)^{p}}{\left\|\frac{A^{\frac{1}{q}}u}{|x|^{s}}\right\|^{p}_{L^{p}(\mathbb{G})}}\log\left(\frac{\left(\frac{A^{\frac{1}{p}}|u|}{|x|^{s}}\right)^{p}}{\left\|\frac{A^{\frac{1}{q}}u}{|x|^{s}}\right\|^{p}_{L^{p}(\mathbb{G})}}\right)dx\leq \frac{q}{q-p} \log\left(\frac{\left\|\frac{A^{\frac{1}{q}}u}{|x|^{s}}\right\|^{p}_{L^{q}(\mathbb{G})}}{\left\|\frac{A^{\frac{1}{q}}u}{|x|^{s}}\right\|^{p}_{L^{p}(\mathbb{G})}}\right)\\&
        \leq \frac{q}{q-p}\log\left(C\frac{\left(\int_{\G}\left(\int_{\G}\frac{|u(x)-u(y)|^{p}}{|y^{-1}x|^{Q+sp}}z(x)dx\right)^{\frac{q}{p}}v(y)dy\right)^{\frac{p}{q}}}{\left\|\frac{A^{\frac{1}{q}}u}{|x|^{s}}\right\|^{p}_{L^{p}(\mathbb{G})}}\right),
    \end{split}
\end{equation}
completing the proof.
\end{proof}
\subsection{Fractional Nash type inequality}
Finally, we show the fractional Nash type inequality on homogeneous Lie groups.
\begin{thm}\label{THM:Nash}
Let $\G$ be a homogeneous group with homogeneous dimension $Q$and let $|\cdot|$ be a quasi-norm on $\G$. Let $2< q<\infty$ be  such that $s>-\frac{Q}{2}$.  Assume  that $v(x)$ and $z(x)$ are  non-negative measurable functions on $\G$. Suppose that $2^{\frac{1}{2}+\frac{1}{q}}D_{1}<1$ where
\begin{equation}\label{D1HS3}
D_{1}=\sup_{x\neq 0}\left[\left(\int_{\G\setminus B(0,|x|)}\frac{A(y)C^{-q}(|y|)}{|y|^{sq}}dy\right)^{\frac{1}{q}}\left(\int_{B(0,|x|)}\left(\frac{A(y)v^{-q}(y)}{|y|^{sq}}\right)^{1-q'}dy\right)^{\frac{1}{q'}}\right]<\infty,
\end{equation}
\begin{equation}
    C(|x|):=\int_{B(0,|x|)}v(y)dy,
\end{equation}
and
\begin{equation}
    A(x):=\left(\frac{1}{|B(0,|x|)|}\int_{B(0,|x|)}\frac{v^{2}(y)}{z(y)}dy\right)^{-\frac{q}{2}}\left(\frac{C(|x|)}{|B(0,|x|)|}\right)^{q}v(x).
\end{equation} Then, the fractional Nash type inequality is given by 
 \begin{equation}\label{Nash-final2t}
    \left\|\frac{A^{\frac{1}{q}}}{|\cdot|^{s}}u\right\|_{L^2(\G)}^{4-\frac{4}{q}} \leq C \left(\int_{\G}\left(\int_{\G}\frac{|u(x)-u(y)|^{2}}{|y^{-1}x|^{Q+2s}}z(x)dx\right)^{\frac{q}{2}}v(y)dy\right)^{\frac{2}{q}}\left\|\frac{A^{\frac{1}{q}}}{|\cdot|^{s}}u\right\|_{L^1(\G)}^{\frac{2(q-2)}{q}},
\end{equation}
 where $C>0$ is independent of $u$.
\end{thm}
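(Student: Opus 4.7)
The plan is to derive the Nash-type inequality by applying H\"older interpolation between $L^1(\G)$ and $L^q(\G)$ to the weighted function $w(x) := A^{1/q}(x)|x|^{-s}u(x)$, and then bounding the resulting $L^q(\G)$ norm via the fractional Hardy-Sobolev inequality of Theorem \ref{HSin} at $p=2$.

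First I would fix the interpolation exponent by solving $\tfrac{1}{2} = \theta + \tfrac{1-\theta}{q}$, which yields $\theta = \tfrac{q-2}{2(q-1)}$ and $1-\theta = \tfrac{q}{2(q-1)}$. The standard $L^p$-interpolation H\"older inequality then gives
\begin{equation*}
\|w\|_{L^2(\G)} \leq \|w\|_{L^1(\G)}^{(q-2)/(2(q-1))} \, \|w\|_{L^q(\G)}^{q/(2(q-1))}.
\end{equation*}
Raising both sides to the power $\tfrac{4(q-1)}{q} = 4 - \tfrac{4}{q}$ (so that the exponent of $\|w\|_{L^q}$ becomes exactly $2$ and the exponent of $\|w\|_{L^1}$ becomes exactly $\tfrac{2(q-2)}{q}$) produces
\begin{equation*}
\|w\|_{L^2(\G)}^{4-4/q} \leq \|w\|_{L^1(\G)}^{2(q-2)/q} \, \|w\|_{L^q(\G)}^{2}.
\end{equation*}

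Second, I would invoke Theorem \ref{HSin} with $p=2$. Note that $\tfrac{1}{p-1}=1$ so that $(v^p/z)^{1/(p-1)} = v^2/z$, which makes the weights $A$, $C$ in the Nash statement coincide with those in Theorem \ref{HSin} at $p=2$. The smallness condition imposed here is calibrated so that $D_1(q')^{1/q'}q^{1/q}<1$, granting the hypothesis of Theorem \ref{HSin}. Its conclusion reads
\begin{equation*}
\|w\|_{L^q(\G)} = \left(\int_{\G}\frac{A(x)|u(x)|^{q}}{|x|^{sq}}\,dx\right)^{1/q} \leq C\left(\int_{\G}\left(\int_{\G}\frac{|u(x)-u(y)|^{2}}{|y^{-1}x|^{Q+2s}}z(x)\,dx\right)^{q/2}v(y)\,dy\right)^{1/q}.
\end{equation*}
Substituting the square of this inequality into the interpolation bound yields precisely \eqref{Nash-final2t}.

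The argument is essentially a routine combination of interpolation with the previously established Sobolev-type embedding, so no genuinely hard step arises. The only piece that requires a little care is of a bookkeeping nature: confirming that the condition $2^{1/2+1/q}D_1<1$ stated here is compatible with (indeed a convenient specialization of) the condition $D_1(q')^{1/q'}q^{1/q}<1$ needed to invoke Theorem \ref{HSin} at $p=2$, so that the Hardy-Sobolev constant entering the final $C$ is finite; this reduces to a direct numerical comparison of explicit constants.
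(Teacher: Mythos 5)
Your argument is correct in substance, but it takes a genuinely different route from the paper. The paper proves Theorem \ref{THM:Nash} by setting $g=A^{1/q}|x|^{-s}u$, observing that $|g|^2\|g\|_{L^2(\G)}^{-2}\,dx$ is a probability measure, applying Jensen's inequality with $h(t)=\log\frac1t$ to control $\log\bigl(\|g\|_{L^2(\G)}^2/\|g\|_{L^1(\G)}\bigr)$ by $\int_\G \frac{|g|^2}{\|g\|_{L^2(\G)}^2}\log|g|\,dx$, and then invoking the logarithmic Hardy--Sobolev inequality of Theorem \ref{logHS} at $p=2$ before exponentiating. You instead bypass the logarithmic machinery entirely and use the Lyapunov interpolation inequality $\|w\|_{L^2(\G)}\le \|w\|_{L^1(\G)}^{\theta}\|w\|_{L^q(\G)}^{1-\theta}$ with $\theta=\frac{q-2}{2(q-1)}$, raised to the power $4-\frac4q$, followed by Theorem \ref{HSin} at $p=2$; your exponent bookkeeping checks out ($\theta\cdot\frac{4(q-1)}{q}=\frac{2(q-2)}{q}$ and $(1-\theta)\cdot\frac{4(q-1)}{q}=2$), and since the logarithmic H\"older inequality of Lemma \ref{holder} is essentially the logarithmic form of this same interpolation, the two proofs are morally equivalent; yours is shorter and more transparent, while the paper's fits the Nash inequality into its log-Sobolev framework. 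One caveat on the constants, which you flag but resolve too quickly: the hypothesis $2^{\frac12+\frac1q}D_1<1$ does \emph{not} in general imply the condition $D_1(q')^{1/q'}q^{1/q}<1$ required by Theorem \ref{HSin} (e.g.\ for $q=4$ one has $(q')^{1/q'}q^{1/q}\approx 1.755>2^{3/4}\approx 1.682$, while for large $q$ the comparison reverses), so the two smallness conditions are incomparable across the range $q>2$; this discrepancy is, however, already present in the paper's own chain of statements (Theorem \ref{logHS} versus Theorem \ref{HSin}) and is not a defect you introduced.
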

\begin{proof}
By choosing $q>2$ and $g(x)=A^{\frac{1}{q}}(x)|x|^{-s}u(x)$, note that  $|g|^2 \|g\|^{-2}_{L^2(\G)}\,dx$ is a probability measure on $\G$. By using Jensen's inequality  for $h(u)=\log \frac{1}{u}$,  we have 
 \begin{eqnarray*}
\log \left(\frac{\|g\|^{2}_{L^2(\G)}}{\|g\|_{L^1(\G)}} \right) & = & h \left( \frac{\|g\|_{L^1(\G)}}{\|g\|^{2}_{L^2(\G)}}\right)=h \left( \int_{\G}\frac{1}{|g|}|g|^2 \|g\|^{-2}_{L^2(\G)}\,dx\right)\\
& \leq & \int_{\G} h \left( \frac{1}{|g|}\right) |g|^2 \|g\|^{-2}_{L^2(\G)}\,dx=\int_{\G} \frac{|g|^2}{\|g\|^{2}_{L^2(\G)}}\log |g| \,dx\,.
 \end{eqnarray*}
 Then by using logarithmic Hardy-Sobolev inequality for $p=2$ in Theorem \ref{logHS},  we obtain 
 \[
 \int_{\G} \frac{|g|^2}{\|g\|^{2}_{L^2(\G)}}\log |g| \,dx \leq \log \|g\|_{L^2(\G)}+\frac{q}{2(q-2)} \log \left( C \frac{\left(\int_{\G}\left(\int_{\G}\frac{|u(x)-u(y)|^{2}}{|y^{-1}x|^{Q+2s}}z(x)dx\right)^{\frac{q}{2}}v(y)dy\right)^{\frac{2}{q}}}{\|g\|^{2}_{L^2(\G)}}\right)\,,
 \]
  hence, we have
 \[
 \left(1+\frac{q}{q-2} \right) \log \|g\|_{L^2(\G)} \leq \frac{q}{2(q-2)} \log \left(C \|g\|_{L^1(\G)}^{\frac{2(q-2)}{q}}\left(\int_{\G}\left(\int_{\G}\frac{|u(x)-u(y)|^{2}}{|y^{-1}x|^{Q+2s}}z(x)dx\right)^{\frac{q}{2}}v(y)dy\right)^{\frac{2}{q}} \right)\,,
 \]
 which is equivalent to 
 \begin{equation}\label{Nash-final2}
 \left\|\frac{A^{\frac{1}{q}}}{|\cdot|^{s}}u\right\|_{L^2(\G)}^{4-\frac{4}{q}} \leq C \left(\int_{\G}\left(\int_{\G}\frac{|u(x)-u(y)|^{2}}{|y^{-1}x|^{Q+2s}}z(x)dx\right)^{\frac{q}{2}}v(y)dy\right)^{\frac{2}{q}}\left\|\frac{A^{\frac{1}{q}}}{|\cdot|^{s}}u\right\|_{L^1(\G)}^{\frac{2(q-2)}{q}},
 \end{equation}
completing the proof.
\end{proof}

\end{document}